   \DeclareMathOperator{\Id}{Id}
   \DeclareMathOperator{\e}{e}
   \DeclareMathOperator{\Lip}{Lip}
   \newcommand{\N}{\mathbb{N}}
   \newcommand{\R}{\mathbb{R}}
   \newcommand{\Z}{\mathbb{Z}}
   \def\cA{\EuScript{A}}
   \def\cF{\EuScript{F}}
   \def\cC{\EuScript{C}}
   \newcommand{\lb}{label}
   \newcommand{\lm}{leftmargin}
\newtheorem{theorem}{Theorem}
\newtheorem{corollary}[theorem]{Corollary}
\renewcommand{\epsilon}{\varepsilon}
\newcommand{\lbd}{\lambda}
\newcommand{\prts}[1]{\left(#1\right)}
\newcommand{\prtsr}[1]{\left[#1\right]}
\newcommand{\abs}[1]{\left|#1\right|}
\newcommand{\set}[1]{\left\{#1\right\}}
\newcommand{\setm}[1]{\setminus\set{#1}}
\newcommand{\sups}[1]{\sup\set{#1}}
\newcommand{\dsum}{\displaystyle\sum}
\newcommand{\ov}[1]{\overline{#1}}
\newcommand{\Es}{\hspace{0.5cm}}%espa\c{c}o
\newcommand{\dst}{\displaystyle}
\renewcommand{\ge}{\geqslant}
\renewcommand{\geq}{\geqslant}
\renewcommand{\le}{\leqslant}
\renewcommand{\leq}{\leqslant}
\begin{document}
%-----------------------------------------------------------------------------%
\title[Nonuniform behavior and stability of hopfield...]
   {Nonuniform behavior and stability of Hopfield neural networks with delay}
\author[Ant\'onio J. G. Bento]{Ant\'onio J. G. Bento}
\address{A. Bento\\
   Departamento de Matem\'atica\\
   Universidade da Beira Interior\\
   6201-001 Covilh\~a\\
   Portugal}
\email{bento@mat.ubi.pt  \protect\phantom{\cite{*}}}
\author{Jos\'e J. Oliveira}
\address{J. Oliveira\\
   Centro de Matem\'atica\\
   Universidade do Minho\\
   Campus de Gualtar\\
   4710-057 Braga\\
   Portugal}
\email{jjoliveira@math.uminho.pt}
\author{C\'esar M. Silva}
\address{C. Silva\\
   Departamento de Matem\'atica\\
   Universidade da Beira Interior\\
   6201-001 Covilh\~a\\
   Portugal}
\email{csilva@mat.ubi.pt}
\urladdr{www.mat.ubi.pt/~csilva}
\date{\today}
\subjclass{39A23; 39A30; 92B20}
\keywords{Discrete neural networks; Nonautonomous delay equations; Asymptotic stability; Periodic solutions}
%-----------------------------------------------------------------------------%
\begin{abstract}
   We obtain a result on the behavior of the solutions of a general nonautonomous Hopfield neural network model with delay, assuming some general bound for the product of consecutive terms in the sequence of neuron charging times and some conditions to control the nonlinear part of the equations. We then apply this result to improve some existent results on the stability of Hopfield models. Our results are based on a new abstract result on the behavior of nonautonomous delayed equations.
\end{abstract}
%-----------------------------------------------------------------------------%
\maketitle
%-----------------------------------------------------------------------------%
\section{Introduction}
%-----------------------------------------------------------------------------%
Due to their many applications in various engineering and scientific areas such as signal processing, image processing and pattern classification (see \cite{Chua_Yang-IEEETCS-1988-2,Chua_Yang-IEEETCS-1988}), neural network models are still a subject of active research. Concerning neural networks, one of the most important objects of study is the obtention of conditions guarantying the global stability of equilibrium states \cite{Mohamad-PD-2001,Mohamad_Gopalsamy-MCS-2000,Mohamad_Gopalsamy-AMC-2003}, periodic solutions \cite{Huang_Mohamad_Xia-CMM-2009,Xu_Wu-ADE-2013} or, more generally, of a particular solution \cite{Huang_Wang_Gao-PLA-2006}.

In the present work we consider a discrete-time nonautonomous neural network with time delay. The relevance of our setting is easily clarified. Firstly, although theoretically speaking neural networks should be described by a continuous-time model, it is essential to formulate discrete-time versions that can be implemented computationally \cite{Mohamad_Gopalsamy-AMC-2003,Mohamad_Naim-JCAM-2002}. Secondly, it is important to consider delay in modelling neural networks in order to reproduce the effect of finite transmission speed of signals among neurons (there is a mathematical counterpart of this since time delay may cause instability and oscillation \cite{Marcus_Westervelt-PRA-1989}). At last, nonautonomy is associated to the change of parameters such as neuron charging time, interconnection weights and external inputs in the course of time. This nonautonomy can be translated not only by time-varying parameters, but also by time-varying delays \cite{Chen_Lu_Liang-PLA-2006, Liu_Tang_Martin_Liu-PLA-2007, Udpin_Niamsup-DDNS-2013, Yu_Zhang_Fei-NARWA-2010}. There are still few stability results in the context of nonautonomous nonperiodic neural network models~\cite{Wei_Zhou_Zhang-CMA-2009}.

Unlike the usual assumptions, our conditions correspond to bound the product of consecutive terms in the sequence of neuron charging times and to assume some condition controlling the nonlinear part of the equations. The classic method of proof is to consider that there is an equilibrium point or a periodic solution and to construct a suitable Lyapunov function to assure the global stability of this particular solution \cite{Chua_Yang-IEEETCS-1988, Gao_Cui-AMM-2009, Mohamad-PD-2001, Mohamad_Gopalsamy-AMC-2003,  Wei_Zhou_Zhang-CMA-2009}. The technique used here is different from the usual ones, namely we see our system as a sufficiently small perturbation of a nonuniform contraction and use Banach's fixed point theorem in some suitable complete metric space to obtain the global stability of our system. This approach allows us to dismiss the requirement of existence of a stationary or, more generally, periodic solution and additionally to consider a more general form for the nonlinear part of the model. When we restrict to the particular case of a periodic Hopfield model, our conditions for existence of a globally stable periodic solution generalize the existent results in the literature \cite{Xu_Wu-ADE-2013}.

This work is organized in the following way. In section \ref{section:applications} we use the discretization technique in~\cite{Mohamad_Gopalsamy-AMC-2003} to obtain a discrete version of a generalized neural network model that includes the well known Hopfield neural network models considered in~\cite{Mohamad_Gopalsamy-AMC-2003,Xu_Wu-ADE-2013} and the bidirectional associative memory neural network models studied in~\cite{Huang_Mohamad_Xia-CMM-2009,Mohamad-PD-2001}. Next, we state our main stability result and we see that it is a consequence of the abstract result considered in section \ref{section:stability...}. As a corollary, we get global exponential stability for the models in~\cite{Wei_Zhou_Zhang-CMA-2009} under distinct hypothesis from the ones assumed in that paper. After, for the periodic model, considering a Poincar\'e map, we obtain the existence of a periodic solution as a consequence of the global exponential stability. This result improves one of the main results in~\cite{Xu_Wu-ADE-2013}. Finally, in section \ref{section:stability...}, we consider general discrete-time delayed models that include our neural network models as particular cases and obtain an abstract global stability result that establishes in particular the stability results in section \ref{section:applications}.
%-----------------------------------------------------------------------------%
\section{Hopfield Models}\label{section:applications}
%-----------------------------------------------------------------------------%
As a generalization of the continuous-time Hopfield neural network models presented in \cite{Mohamad_Gopalsamy-AMC-2003, Xu_Wu-ADE-2013} we have
\begin{equation} \label{hopfield}
   x'_i(t)
   = -a_i(t)x_i(t)+\sum_{j=1}^N k_{ij}(t,x_j(t-\alpha_{ij}(t))),\Es t\geq 0,\,i=1,\ldots,N,
\end{equation}
where $a_i:[0,+\infty[ \to [0,+\infty[$, $k_{ij}:[0,+\infty[ \times \R \to \R$, and $\alpha_{ij}:[0,+\infty[\to[0,+\infty[$ are continuous functions with $\alpha_{ij}$ bounded and $k_{ij}$ Lipschitz on the second variable. Here $a_i(t)$ is the neuron charging time.

Following the ideas in \cite{Mohamad_Gopalsamy-AMC-2003}, to obtain a discrete-time analogue of the continuous-time model \eqref{hopfield}, we consider the following approximation
\begin{equation} \label{hopfield-2}
      x'_i(t)= -a_i([t/h]h)x_i\prts{t}
      + \sum_{j=1}^N k_{ij}\prts{[t/h]h,x_j\prts{[t/h]h- \prtsr{\dfrac{\alpha_{ij}\prts{[t/h]h}}{h}} h}},
\end{equation}
$i=1,\ldots,N$, $t\in[mh,(m+1)h[$ for $m\in\N_0$, where $h$ is a fixed positive real number (discretization step size) and $[r]$ denotes the integer part of the real number $r$. Clearly, for $t\in[mh,(m+1)h[$ we have $[t/h]=m$ and
the model \eqref{hopfield-2} has the form
\begin{equation*} %\label{hopfield-3}
  x'_i(t)=-a_i(mh)x_i(t)+ \sum_{j=1}^N k_{ij}\prts{mh,x_j\prts{\prts{m- \prtsr{\dfrac{\alpha_{ij}(mh)}{h}}}h}},
\end{equation*}
which is equivalent to
\begin{equation} \label{hopfield-4}
  e^{a_i(mh)t}x'_i(t)+a_i(mh)e^{a_i(mh)t}x_i(t)=e^{a_i(mh)t} \sum_{j=1}^N k_{ij}\prts{mh,x_j\prts{\prts{m- \tau_{ij}(m)}h}},
\end{equation}
where
   $$\tau_{ij}(m)
      = \prtsr{\dfrac{\alpha_{ij}(mh)}{h}}.$$
Integrating \eqref{hopfield-4} over $[mh,t[$, with $t<(m+1)h$, we obtain
\begin{equation*} %\label{hopfield-5}
  \int_{mh}^t\prtsr{e^{a_i(mh)s}x_i(s)}'ds= \left(\frac{e^{a_i(mh)t}-e^{a_i(mh)mh}}{a_i(mh)}\right)
  \sum_{j=1}^N k_{ij}\prts{mh,x_j\prts{\prts{m- \tau_{ij}(m)}h}},
\end{equation*}
which is equivalent to
\begin{equation*} %\label{hopfield-6}
  x_i(t)= e^{a_i(mh)(mh-t)}x_i(mh)+ \left(\frac{1-e^{a_i(mh)(mh-t)}}{a_i(mh)}\right) \sum_{j=1}^N k_{ij}\prts{mh,x_j\prts{\prts{m- \tau_{ij}(m)}h}}.
\end{equation*}

Letting $t\to(m+1)h$, we obtain
\begin{equation} \label{hopfield-7}
  x_i((m+1)h)=e^{-a_i(mh)h}x_i(mh)+\left(\frac{1-e^{-a_i(mh)h}}{a_i(mh)}\right) \sum_{j=1}^N k_{ij}\prts{mh,x_j\prts{\prts{m- \tau_{ij}(m)}h}}.
\end{equation}
Thus, identifying $x_i(mh)$ with $x_i(m)$, $a_i(mh)$ with $a_i(m)$ and $k_{ij}(mh,\,\cdot\,)$ with $k_{ij}(m,\,\cdot\,)$ and defining
\begin{equation} \label{def:theta_i(m)=...}
   \theta_i(m)=\frac{1-e^{-a_i(m)h}}{a_i(m)},
\end{equation}
   $$ $$
equation~\eqref{hopfield-7} becomes
\begin{equation} \label{hopfield-8}
  x_i(m+1)=e^{-a_i(m)h}x_i(m)+\theta_i(m) \sum_{j=1}^N k_{ij}\big(m,x_j(m-\tau_{ij}(m))\big).
\end{equation}

The model~\eqref{hopfield-8} can be rewritten in the following way
\begin{equation} \label{HOPFIELD}
    x_i(m+1)=c_i(m)x_i(m)+\dst\sum_{j=1}^N h_{ij}\big(m,x_j(m-\tau_{ij}(m))\big),
\end{equation}
$i=1,\ldots,N$, $m\in\N_0$, where $c_i:\N_0 \to ]0,1[ $, $\tau_{ij}:\N_0\to\N_0$ are bounded functions with $\tau:=\dst\max\{\tau_{ij}(m):m\in\N,i,j=1,\ldots,N\}$, and $h_{ij}: \N_0 \times \R \to \R$ are Lipschitz functions on the second variable, i.e., there exist $H_{ij}:\N_0\to\R^+$ such that
   $$ |h_{ij}(m,u)-h_{ij}(m,v)|
      \leq H_{ij}(m)|u-v|,\,\,\,\forall u,v\in\R,\,m\in \N_0.$$
In this paper we consider the Hopfield neural network model~\eqref{HOPFIELD} that generalizes some existent models in the literature~\cite{Huang_Mohamad_Xia-CMM-2009, Mohamad-PD-2001, Mohamad_Gopalsamy-AMC-2003, Xu_Wu-ADE-2013}.

Before stating our main result, we need to introduce some notation. Let
   $$ \Delta = \set{\prts{m,n} \in \Z^2 \colon m \ge n \ge 0}$$
and, given a set $I\subseteq\R$ and a number $r\in\Z^-$, define $I_\Z = I \cap \Z$. Consider the space $X$ of the functions
   $$ \alpha:[r,0]_\Z \to \R$$
equipped with the norm
   $$ \|\alpha\| = \max\limits_{j=r, \ldots,0} |\alpha(j)|.$$
Given $N \in \N$, we are going to consider the cartesian products $X^N$ and $\R^N$ equipped with the supremum norm, i.e., for $\ov\alpha = \prts{\alpha_1, \ldots, \alpha_N} \in X^N$ and $\ov y = \prts{y_1, \ldots, y_N} \in \R^N$, we have
   $$ \|\ov\alpha\|
      = \max_{i=1, \ldots,N} \|\alpha_i\|
      = \max_{i=1, \ldots,N} \prts{\max_{j=r, \ldots, 0} \abs{\alpha_i(j)}}$$
and
   $$ |\ov y| = \max_{i=1, \ldots, N} |y_i|.$$

Given $n \in \N_0$ and a function $\ov x \colon [n+r,+\infty[_\Z \to \R^N$ we denote the $i$th component by $x_i$, i.e., $\ov x = \prts{x_1, \ldots x_N}$. For each $m \in \N_0$ such that $m \ge n$, we define $\ov x_m \in X^N$ by
   $$ \ov x_m(j) = \ov x(m+j), \ j=r, r+1, \ldots, 0.$$

For each $n \in \N_0$ and each $\ov\alpha \in X^N$, we denote by $\ov x (\cdot,n,\ov\alpha)$ the unique solution
   $$ \ov x:[n+r,+\infty[_\Z \to \R^N$$
of~\eqref{HOPFIELD} with initial conditions $\ov x_n = \ov\alpha$.

We now state our main global stability result for the neural network model given by~\eqref{HOPFIELD}. This theorem furnishes a bound for the distance between solutions of~\eqref{HOPFIELD} based on bound for the products of consecutive neuron charging times, assuming that the Lipschitz constants of the nonlinear part of the model are sufficiently small. We will use it to obtain several results on the stability of several neural networks.

\begin{theorem}\label{thm:aplicacao-cont}
   Consider model~\eqref{HOPFIELD} and assume that there exist a double sequence $(a'_{m,n})_{(m,n) \in \Delta}$ such that
	\begin{equation} \label{eq:cond-aplicacao-cont-1}
      a^{(i)}_{m,n}
      := \prod_{s=n}^{m-1}c_i(s)\leq a'_{m,n},
	\end{equation}
   for all $i=1, \ldots, N$ and all $(m,n) \in \Delta$, and
	\begin{equation*} %\label{eq:cond-aplicacao-cont-2}
		\lambda
		:= \max_{i=1,\ldots,N} \prtsr{\sup_{(m,n) \in \Delta}
         \set{\frac{1}{a'_{m,n}} \sum_{k=n}^{m-1} a^{(i)}_{m,k+1} a'_{k,n}
            \, \dst \sum_{j=1}^N H_{ij}(k)}}
      < 1.
	\end{equation*}
   Then, for every $\ov\alpha,\ov\alpha^*:[r,0]_\Z\to\R^N$ and every $(m,n) \in \Delta$, we have
	\begin{equation*}
		\|\ov x_m(\cdot,n,\ov\alpha)-\ov x_m(\cdot,n,\ov\alpha^*)\|
		\le \dfrac{1}{1-\lbd} \, a'_{m,n} \|\ov\alpha-\ov\alpha^*\|.
	\end{equation*}
\end{theorem}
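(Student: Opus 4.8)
The plan is to reduce everything to a discrete variation-of-constants formula plus a Gronwall/fixed-point estimate whose contraction constant is exactly $\lambda$. First I would record that, for each $n\in\N_0$, each $\ov\alpha\in X^N$ and each $m\ge n$, the solution $\ov x=\ov x(\cdot,n,\ov\alpha)$ of~\eqref{HOPFIELD} satisfies, componentwise,
\[
   x_i(m)=a^{(i)}_{m,n}\,\alpha_i(0)+\sum_{k=n}^{m-1}a^{(i)}_{m,k+1}\sum_{j=1}^{N}h_{ij}\bigl(k,x_j(k-\tau_{ij}(k))\bigr),
\]
with the convention $a^{(i)}_{n,n}=1$; this is proved by induction on $m$ exactly as in the telescoping~\eqref{hopfield-4}--\eqref{hopfield-7}. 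Since~\eqref{HOPFIELD} is an explicit recursion, existence and uniqueness of $\ov x(\cdot,n,\ov\alpha)$ are automatic, so the whole content lies in the quantitative bound.

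Next, fixing $n$ and two initial conditions $\ov\alpha,\ov\alpha^{*}$, I would put $\ov y=\ov x(\cdot,n,\ov\alpha)-\ov x(\cdot,n,\ov\alpha^{*})$, subtract the two variation-of-constants formulas, and apply the Lipschitz bounds $|h_{ij}(k,u)-h_{ij}(k,v)|\le H_{ij}(k)|u-v|$ together with~\eqref{eq:cond-aplicacao-cont-1}. Since $r\le-\tau$ forces $k-\tau_{ij}(k)\in[k+r,k]$, one has $|y_j(k-\tau_{ij}(k))|\le\|\ov y_k\|$, and hence, for every $m\ge n$ and every $i$,
\[
   |y_i(m)|\le a'_{m,n}\,\|\ov\alpha-\ov\alpha^{*}\|+\sum_{k=n}^{m-1}a^{(i)}_{m,k+1}\,\|\ov y_k\|\sum_{j=1}^{N}H_{ij}(k),
\]
while on the initial window $\|\ov y_n\|=\|\ov\alpha-\ov\alpha^{*}\|$ and $|y_i(l)|\le\|\ov\alpha-\ov\alpha^{*}\|$ for $l\in[n+r,n]$.

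Then I would close the estimate by a Gronwall-type induction on $m$ --- equivalently, by Banach's fixed point theorem applied to the operator given by the right-hand side of the variation-of-constants formula, on the complete metric space of sequences with prescribed initial segment and metric built from $\|\ov z_m-\ov w_m\|/a'_{m,n}$, the contraction constant being $\lambda$. Writing $C=\frac{1}{1-\lambda}\|\ov\alpha-\ov\alpha^{*}\|$, the goal is $\|\ov y_m\|\le C\,a'_{m,n}$ for all $m\ge n$; the heart of the induction is that, inserting the hypothesis $\|\ov y_k\|\le C\,a'_{k,n}$ for $n\le k<m$ into the displayed inequality,
\[
   |y_i(m)|\le a'_{m,n}\,\|\ov\alpha-\ov\alpha^{*}\|+C\sum_{k=n}^{m-1}a^{(i)}_{m,k+1}\,a'_{k,n}\sum_{j=1}^{N}H_{ij}(k)\le a'_{m,n}\bigl(\|\ov\alpha-\ov\alpha^{*}\|+\lambda C\bigr)=C\,a'_{m,n},
\]
where the middle inequality is precisely the definition of $\lambda$ and the final equality is $\|\ov\alpha-\ov\alpha^{*}\|+\lambda C=(1-\lambda)C+\lambda C=C$ --- this is where the factor $\tfrac{1}{1-\lambda}$ comes from; the base case and the entries of the window $[m+r,m]$ lying below $n$ are dealt with by the initial-condition bound together with $a'_{n,n}\ge a^{(i)}_{n,n}=1$.

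The step I expect to be the \emph{main obstacle} is exactly this delay bookkeeping: one must be sure that when the delayed terms are dominated by a segment norm $\|\ov y_k\|$ that norm is in turn dominated by the weight $a'_{k,n}$ occurring inside the definition of $\lambda$, so that nothing is lost in passing from pointwise values to segments nor in shifting the index by the delay. This is precisely what the abstract result of Section~\ref{section:stability...} is built to handle: it is formulated for general delayed recursions, the delay being absorbed into the phase space $X^N$ and $a'_{m,n}$ playing the role of the bound of a nonuniform contraction, and Theorem~\ref{thm:aplicacao-cont} is obtained as its specialization to~\eqref{HOPFIELD} with $\sum_{j}H_{ij}(k)$ measuring the size of the nonlinear perturbation.
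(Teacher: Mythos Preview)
Your proposal is correct and follows essentially the same route as the paper: you pass to the difference $\ov y=\ov x(\cdot,n,\ov\alpha)-\ov x(\cdot,n,\ov\alpha^{*})$, observe that it satisfies a Hopfield-type recursion with zero as an equilibrium, and then bound $\|\ov y_m\|$ by $\tfrac{1}{1-\lambda}a'_{m,n}\|\ov\alpha-\ov\alpha^{*}\|$ via the variation-of-constants formula together with a contraction/Gronwall argument whose constant is exactly $\lambda$. The paper packages the last step as a direct appeal to the abstract Theorem~\ref{thm:abstract} (Banach fixed point on the space with weight $a'_{m,n}$), whereas you unpack that argument explicitly; your remark that the delay bookkeeping is the crux and is ``precisely what the abstract result of Section~\ref{section:stability...} is built to handle'' is exactly how the paper proceeds.
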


\begin{proof}
	Consider $n\in\N_0$ and $\ov\alpha, \ov\alpha^*:[r,0]_\Z\to\R^N$. The change
	$$ \ov y(m)=\ov x(m,n,\ov\alpha)-\ov x(m,n,\ov\alpha^*)$$
	transforms \eqref{HOPFIELD} into the system
	\begin{equation} \label{HOPFIELD-2}
		y_i(m+1)
		= c_i(m)y_i(m) +\dsum_{j=1}^N \widetilde{h}_{ij}(m,y_j(m-\tau_{ij}(m))),
	\end{equation}
	$i=1,\ldots,N$, $m \ge n$, where
	  $$ \widetilde{h}_{ij}(m,u)
	     = h_{ij}(m,u+x_j(m-\tau_{ij}(m),n,\ov \alpha^*))
         - h_{ij}(m,x_j(m-\tau_{ij}(m),n,\ov \alpha^*)).$$
   Now, $\ov y=0$ is an equilibrium point of \eqref{HOPFIELD-2} and, by Theorem~\ref{thm:abstract}, we obtain, for all function $\ov\beta:[r,0]_\Z\to\R^N$,
	  $$ \|\ov y_m(\cdot,n,\ov\beta)\|
	     \le \dfrac{1}{1-\lbd} \, a'_{m,n} \|\ov\beta\|$$
   for all $m \ge n$. Letting $\ov\beta=\ov\alpha-\ov\alpha^*$, we conclude that
	  $$ \|\ov x_m(\cdot,n,\ov\alpha) -\ov x_m(\cdot,n,\ov\alpha^*)\|
	     = \|\ov y_m(\cdot,n,\ov\alpha-\ov\alpha^*)\|
	     \le \dfrac{1}{1-\lbd} \, a'_{m,n} \|\ov\alpha-\ov\alpha^*\|$$
   for all $m \ge n$.
\end{proof}

In~\cite{Xu_Wu-ADE-2013} the authors considered the following discretization of a nonautonomous continuous-time Hopfield neural network model, which is a particular case of~\eqref{HOPFIELD},
\begin{equation} \label{HOPFIELD-Xu-Wu}
	x_i(m+1)
   = x_i(m)\e^{-a_i(m)h}+\theta_i(m)
      \left[ \dst\sum_{j=1}^N b_{ij}(m)f_j(x_j(m-\tau(m)))+I_i(m) \right],
\end{equation}
$i=1,\ldots,N$, where $a_i, b_{ij}, I_i, \tau : \N_0 \to \R$ are bounded functions with $a_i(m) > 0$, $0 \le \tau(m) \le \tau$, $f_j:\R \to \R$  are Lipschitz functions with Lipschitz constant $F_j>0$, $\theta_i(m)$ is given by~\eqref{def:theta_i(m)=...} and $h>0$ ($h$ is the discretization step size). We are going to use the following notation
   $$ a_i^-
      = \inf_m a_i(m) \ \ \ \text{ and } \ \ \
      b_{ij}^+ = \sup_m |b_{ij}(m)|
      \ \ \ \text{ and } \ \ \
      \theta_i^+ = \sup_m \theta_i (m).$$

We have the following result that establishes the global exponential stability of all solutions of~\eqref{HOPFIELD-Xu-Wu}.

\begin{corollary}\label{thm:aplicacao-cont-Xu-Wu}
	If
	\begin{equation} \label{eq:a_i^->alpha...}
		a_i^- > \sum_{j=1}^{N} b_{ij}^+ F_j.
	\end{equation}
   for every $i=1, \ldots, N$, then model~\eqref{HOPFIELD-Xu-Wu} is globally exponentially stable, i.e., there are constants $\mu>0$ and $C>1$ such that
	\begin{equation*}
		\|\ov x_m(\cdot,n,\ov\alpha)-\ov x_m(\cdot,n,\ov\alpha^*)\|
		\le  \,C \e^{-\mu(m-n)} \|\ov\alpha-\ov\alpha^*\|
	\end{equation*}
   for every $\ov\alpha,\ov\alpha^*:[-\tau,0]_\Z\to\R^N$ and every $(m,n) \in \Delta$.
\end{corollary}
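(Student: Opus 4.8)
The plan is to recognize model~\eqref{HOPFIELD-Xu-Wu} as the particular case of~\eqref{HOPFIELD} obtained by taking $c_i(m)=\e^{-a_i(m)h}$, $\tau_{ij}(m)=\tau(m)$ and
   $$ h_{ij}(m,u)=\theta_i(m)b_{ij}(m)f_j(u)+\frac{1}{N}\,\theta_i(m)I_i(m),$$
and then to apply Theorem~\ref{thm:aplicacao-cont} with an exponentially decaying double sequence $a'_{m,n}=\e^{-\mu_0(m-n)}$, for a small $\mu_0>0$ to be chosen. Since $f_j$ is $F_j$-Lipschitz, an admissible Lipschitz sequence for $h_{ij}$ is $H_{ij}(m)=\theta_i^+\,b_{ij}^+\,F_j$; also, hypothesis~\eqref{eq:a_i^->alpha...} forces $a_i^->0$, so that $c_i(m)\in{]0,1[}$ and $\theta_i^+<+\infty$, as required by~\eqref{HOPFIELD}.

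The first preparatory step is to bound $\theta_i^+$ in terms of $a_i^-$. I would check that $g(x):=(1-\e^{-xh})/x$ is decreasing on ${]0,+\infty[}$ — this reduces to the elementary inequality $\e^{xh}\ge 1+xh$ — and conclude that $\theta_i^+=\sup_m g(a_i(m))\le g(a_i^-)=(1-\e^{-ha_i^-})/a_i^-$. The second preparatory step is to verify condition~\eqref{eq:cond-aplicacao-cont-1} for $a'_{m,n}=\e^{-\mu_0(m-n)}$ with $0<\mu_0<h\min_i a_i^-$: this is immediate since $a^{(i)}_{m,n}=\e^{-h\sum_{s=n}^{m-1}a_i(s)}\le\e^{-ha_i^-(m-n)}\le a'_{m,n}$.

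The core of the argument is the estimation of $\lambda$. Substituting $a'_{m,n}=\e^{-\mu_0(m-n)}$ and using $a^{(i)}_{m,k+1}\le\e^{-ha_i^-(m-1-k)}$ together with $\sum_j H_{ij}(k)\le g(a_i^-)B_i$, where $B_i:=\sum_{j=1}^N b_{ij}^+F_j$, the inner sum in the definition of $\lambda$ becomes, after the substitution $\ell=k-n$, a finite geometric series with ratio $\e^{ha_i^--\mu_0}>1$. Summing it and simplifying the exponentials, the $(m,n)$-dependent factor collapses to $1-\e^{-(ha_i^--\mu_0)(m-n)}<1$, and one is left with
   $$ \lambda\le\max_{i=1,\ldots,N}\frac{B_i(\e^{ha_i^-}-1)}{a_i^-\,(\e^{ha_i^--\mu_0}-1)}.$$
At $\mu_0=0$ this upper bound equals $\max_i B_i/a_i^-$, which by~\eqref{eq:a_i^->alpha...} is strictly less than $1$; since there are only finitely many indices $i$, by continuity in $\mu_0$ one may fix $\mu_0\in{]0,h\min_i a_i^-[}$ small enough that $\lambda<1$. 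Theorem~\ref{thm:aplicacao-cont} then gives $\|\ov x_m(\cdot,n,\ov\alpha)-\ov x_m(\cdot,n,\ov\alpha^*)\|\le\frac{1}{1-\lambda}\e^{-\mu_0(m-n)}\|\ov\alpha-\ov\alpha^*\|$, so the conclusion holds with $\mu=\mu_0$ and $C=\max\{2,(1-\lambda)^{-1}\}$.

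The main obstacle is precisely the geometric-series computation behind the displayed bound on $\lambda$: one must carry out the simplification carefully enough to see that this bound tends to $\max_i B_i/a_i^-$ as $\mu_0\to0^+$, since it is this limiting identity that converts the hypothesis $a_i^->\sum_j b_{ij}^+F_j$ into the existence of a genuine exponential decay rate $\mu_0$. The remaining ingredients — the identification of the model, the formula for the Lipschitz constants, and the monotonicity of $g$ — are routine bookkeeping.
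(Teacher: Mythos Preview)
Your proposal is correct and follows essentially the same route as the paper's proof: both apply Theorem~\ref{thm:aplicacao-cont} with the exponential weight $a'_{m,n}=\e^{-\mu(m-n)}$, bound $a^{(i)}_{m,n}\le\e^{-ha_i^-(m-n)}$ and $\theta_i(m)\le(1-\e^{-ha_i^-})/a_i^-$, reduce the inner sum in $\lambda$ to a geometric series, and then use the strict inequality~\eqref{eq:a_i^->alpha...} together with continuity in $\mu$ to force $\lambda<1$. The only cosmetic difference is that the paper first writes down the explicit condition $\dfrac{\e^{ha_i^--\mu}-1}{\e^{ha_i^-}-1}\,a_i^->\sum_j b_{ij}^+F_j$ on $\mu$ and then verifies $\lambda<1$, whereas you derive the closed-form bound on $\lambda$ first and invoke continuity at $\mu_0=0$; these are the same argument presented in opposite orders.
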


\begin{proof}
   We will show that we are in the conditions of Theorem~\ref{thm:aplicacao-cont}. Defining $\nu_i=a_i^-h$, by ~\eqref{eq:a_i^->alpha...} there is positive number $\mu < \min\limits_i\nu_i$ such that
	\begin{equation} \label{eq:a_i^->alpha+beta...}
		\dfrac{ \e^{\nu_i-\mu}-1}{\e^{\nu_i} -1}a_i^- >
		\sum_{j=1}^{N} b_{ij}^+ F_j,\ \ \ \forall i=1,\ldots,N.
	\end{equation}
   Putting $a'_{m,n}=\e^{-\mu (m-n)}$, condition~\eqref{eq:cond-aplicacao-cont-1} is trivially satisfied because
      $$ a^{(i)}_{m,n}
         \le \e^{-\nu_i (m-n)}
         \le \e^{-\mu (m-n)}.$$
   Since $\theta_i^+ = \dfrac{1-\e^{-\nu_i}}{a_i^-}$, we have by~\eqref{eq:a_i^->alpha+beta...} that
	\begin{align*}
		\lbd
		& = \max_{i=1,\ldots,N}\prtsr{\sup_{(m,n) \in \Delta}
         \set{\dfrac{1}{a'_{m,n}}
			\sum_{k=n}^{m-1} a^i_{m,k+1}a'_{k,n} \, \theta_i(k)
         \sum_{j=1}^N |b_{ij}(k)| F_j}} \\
		& \leq \max_{i=1,\ldots,N}\prtsr{\sup_{(m,n) \in \Delta}
         \set{\e^{\mu (m-n)}
			\sum_{k=n}^{m-1} \e^{-\nu_i (m-k-1)-\mu (k-n)}} \theta^+_i
         \sum_{j=1}^N b_{ij}^+ F_j}\\
		& < \max_{i=1,\ldots,N}\prtsr{\sup_{(m,n) \in \Delta}
         \set{\sum_{k=n}^{m-1} \e^{(\nu_i-\mu)(k-m)}} \ \e^{\nu_i} \
			\dfrac{1-\e^{-\nu_i}}{a_i^-}
         \dfrac{\e^{\nu_i-\mu}-1}{\e^{\nu_i} -1}a_i^-}\\
		& = \max_{i=1,\ldots,N}\prtsr{\sup_{(m,n) \in \Delta}
         \set{\dfrac{1-\e^{(\nu_i-\mu)(n-m)}}{\e^{\nu_i-\mu}-1}} \
			\dfrac{(\e^{\nu_i}-1)(\e^{\nu_i-\mu}-1)}{(\e^{\nu_i}-1)a^-_i}a_i^-}\\
		& = \max_{i=1,\ldots,N} \prtsr{\sup_{(m,n) \in \Delta}
         \set{1-\e^{(\nu_i-\mu)(n-m)}}}\\
		&  = 1
	\end{align*}
	and this proves the corollary.
\end{proof}

In the next corollary we slightly improve condition~\eqref{eq:a_i^->alpha...} in the last corollary. To do that we need to define the concept of an $M$-matrix. We say that a square real matrix is an $M$-matrix if the off-diagonal entries are nonpositive and all the eigenvalues have positive real part.

Now consider the $N\times N$-matrix $\mathcal{M}$ defined by
\begin{equation*} %\label{M-matriz}
	\mathcal{M}=diag(a_1^-,\ldots,a_N^-)-\left[b^+_{ij}F_j\right]
\end{equation*}

\begin{corollary}\label{thm:aplicacao-cont-Xu-Wu-2}
   If $\mathcal{M}$ is an M-matrix, then the model \eqref{HOPFIELD-Xu-Wu} is global exponential stable, i.e., there are $\mu>0$ and $C>1$ such that
	\begin{equation*}
		\|\ov x_m(\cdot,n,\ov\alpha)-\ov x_m(\cdot,n,\ov\alpha^*)\|
		\le  \,C \e^{-\mu(m-n)} \|\ov\alpha-\ov \alpha^*\|.
	\end{equation*}
   for every $\ov\alpha,\ov\alpha^*:[-\tau,0]_\Z\to\R^N$ and every $(m,n) \in \Delta$.
\end{corollary}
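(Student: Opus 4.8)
The plan is to deduce this from Corollary~\ref{thm:aplicacao-cont-Xu-Wu} by a diagonal rescaling of the state variables. The tool that makes this work is the classical characterization of nonsingular $M$-matrices: since $\mathcal{M}$ has nonpositive off-diagonal entries and all its eigenvalues have positive real part, there is a vector $\ov d=(d_1,\ldots,d_N)$ with $d_i>0$ for every $i$ such that all components of $\mathcal{M}\ov d$ are positive, i.e.
$$a_i^- d_i > \sum_{j=1}^N b_{ij}^+ F_j\, d_j, \qquad i=1,\ldots,N.$$

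First I would fix such a $\ov d$ and perform the change of variables $z_i(m)=x_i(m)/d_i$ in~\eqref{HOPFIELD-Xu-Wu}. Dividing the $i$th equation by $d_i$ shows that $\ov z=(z_1,\ldots,z_N)$ solves a system of the same type~\eqref{HOPFIELD-Xu-Wu}, with the same charging times $a_i(m)$ (hence the same $a_i^-$) and the same coefficients $\theta_i(m)$ and delays $\tau(m)$, but with $b_{ij}(m)$ replaced by $\tilde b_{ij}(m)=b_{ij}(m)/d_i$, with $I_i(m)$ replaced by $I_i(m)/d_i$ (both still bounded), and with $f_j$ replaced by $\tilde f_j(u)=f_j(d_j u)$, which is Lipschitz with constant $\tilde F_j=d_j F_j$. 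Consequently $\tilde b_{ij}^+ = b_{ij}^+/d_i$, so condition~\eqref{eq:a_i^->alpha...} for the rescaled system reads $a_i^- > \sum_j \tilde b_{ij}^+ \tilde F_j = \frac{1}{d_i}\sum_j b_{ij}^+ F_j\, d_j$, which is exactly the inequality $\mathcal{M}\ov d>0$ recorded above.

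Then, by Corollary~\ref{thm:aplicacao-cont-Xu-Wu} applied to the rescaled system, there are $\mu>0$ and $C_0>1$ with
$$\|\ov z_m(\cdot,n,\ov\beta)-\ov z_m(\cdot,n,\ov\beta^*)\| \le C_0\,\e^{-\mu(m-n)}\|\ov\beta-\ov\beta^*\|$$
for all $(m,n)\in\Delta$. Finally I would transfer this estimate back to $\ov x$: the map $\ov x\mapsto\ov z$ is the linear isomorphism associated with $diag(d_1,\ldots,d_N)$, so a solution of~\eqref{HOPFIELD-Xu-Wu} with initial datum $\ov\alpha$ corresponds to a solution of the rescaled system with initial datum $\ov\beta$, $\beta_i=\alpha_i/d_i$, and the elementary bounds $\|\ov x_m-\ov x_m^*\|\le(\max_i d_i)\,\|\ov z_m-\ov z_m^*\|$ and $\|\ov\beta-\ov\beta^*\|\le(\min_i d_i)^{-1}\|\ov\alpha-\ov\alpha^*\|$ give the claim with the same $\mu$ and the constant $C=C_0(\max_i d_i)/(\min_i d_i)>1$.

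The only genuinely non-routine ingredient is the $M$-matrix characterization, equivalently the existence of the positive scaling vector $\ov d$; once it is in hand, the rest is bookkeeping of how the Lipschitz constants, the weights $b_{ij}$ and the sup-norms behave under the diagonal change of variables. (One could equally well bypass Corollary~\ref{thm:aplicacao-cont-Xu-Wu} and feed the rescaled data directly into Theorem~\ref{thm:aplicacao-cont} with $a'_{m,n}=\e^{-\mu(m-n)}$, but reusing the previous corollary is shorter.)
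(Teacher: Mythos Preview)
Your proposal is correct and follows essentially the same argument as the paper: both use the $M$-matrix characterization to obtain a positive scaling vector $\ov d$, perform the diagonal change of variables $y_i=d_i^{-1}x_i$, verify that the rescaled system satisfies~\eqref{eq:a_i^->alpha...}, and invoke Corollary~\ref{thm:aplicacao-cont-Xu-Wu}. Your write-up is in fact slightly more complete, since you explicitly carry the exponential estimate back from $\ov z$ to $\ov x$ via the bounds $\max_i d_i$ and $(\min_i d_i)^{-1}$, a step the paper leaves implicit.
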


\begin{proof}
   If $\mathcal{M}$ is an M-matrix, then (see \cite[Theorem 5.1] {Fiedler-1986}) there is $\ov d=(d_1,\ldots,d_N)>0$ such that $\mathcal{M} \, \ov d > 0$, i.e.,
	\begin{equation} \label{eq:a_i^->alpha...2}
		d_ia_i^- > \sum_{j=1}^{N} d_jb_{ij}^+ F_j.
	\end{equation}

   The change $y_i(m)=d_i^{-1}x_i(m)$, $m\in\N_0$ and $i=1,\ldots,N$, transforms \eqref{HOPFIELD-Xu-Wu} into
	\begin{equation*} %\label{HOPFIELD-Xu-Wu-2}
		y_i(m+1)=y_i(m)\e^{-a_i(m)h}+\theta_i(m)
      \left[ \dst\sum_{j=1}^N \tilde{b}_{ij}(m)\tilde{f}_j(y_j(m-\tau(m)))+\tilde{I}_i(m) \right],
	\end{equation*}
	where
	$$
	\tilde{b}_{ij}(m)=d_i^{-1}b_{ij}(m),\ \ \tilde{f}_j(u)=f_j(d_j u),\ \ \text{and} \ \ \tilde{I}_i(m)=d_i^{-1}I_i(m),
	$$
	for $m\in\N_0$ and $u\in\R$. As $f_j$ are Lipschitz functions with constant $F_j$, then $\tilde{f}_j$ are also Lipshitz functions with constant $\tilde{F}_j=d_jF_j$. From \eqref{eq:a_i^->alpha...2} we have
	$$
	a_i^- > \sum_{j=1}^{N}d_i^{-1}b_{ij}^+d_jF_j
	$$
	which is equivalent to
	$$
	a_i^- > \sum_{j=1}^{N}\tilde{b}_{ij}^+\tilde{F}_j
	$$
	and the result follows from the Corollary \ref{thm:aplicacao-cont-Xu-Wu}.
\end{proof}

Now we improve \cite[Theorem 4.2]{Xu_Wu-ADE-2013}, which proves the existence and global stability of the periodic solution of the Hopfield neural network model~\eqref{HOPFIELD-Xu-Wu} with periodic coefficients. Let $\omega\in\N$ and consider the model \eqref{HOPFIELD-Xu-Wu} where  $a_i, b_{ij}, I_i, \tau : \N_0 \to \R$ are $\omega$-periodic functions.

\begin{theorem} %\label{thm:aplicacao-cont-Xu-Wu-periodico}
   If $\mathcal{M}$ is a $M$-matrix, then the model~\eqref{HOPFIELD-Xu-Wu} has a unique $\omega$-periodic solution which is globally exponentially stable.
\end{theorem}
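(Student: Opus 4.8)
The strategy is to deduce the statement from the global exponential stability already obtained in Corollary~\ref{thm:aplicacao-cont-Xu-Wu-2}, combined with a Poincar\'e (period map) argument carried out in the phase space $X^N$ of functions $[-\tau,0]_\Z\to\R^N$. \emph{Step 1 (the Poincar\'e operator).} Since $\omega\in\N$, for each $\ov\alpha\in X^N$ the segment $\ov x_\omega(\cdot,0,\ov\alpha)$ of the solution of~\eqref{HOPFIELD-Xu-Wu} with $\ov x_0=\ov\alpha$ is well defined, so one sets $P\colon X^N\to X^N$, $P(\ov\alpha)=\ov x_\omega(\cdot,0,\ov\alpha)$. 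Because $a_i,b_{ij},I_i$ and $\tau$ are $\omega$-periodic, the weights $\theta_i$ (which depend only on $a_i$) are $\omega$-periodic as well, hence for every $\ov\beta\in X^N$ and every $\ell\in\N_0$ the sequence $m\mapsto\ov x(m+\ell\omega,0,\ov\beta)$ is again a solution of~\eqref{HOPFIELD-Xu-Wu}; comparing its initial segment at $0$ with $\ov x_{\ell\omega}(\cdot,0,\ov\beta)$ and invoking uniqueness of solutions gives the translation identity
\[
   \ov x_{m+\ell\omega}(\cdot,0,\ov\beta)=\ov x_{m}\big(\cdot,0,\ov x_{\ell\omega}(\cdot,0,\ov\beta)\big),\qquad m\ge0,\ \ell\in\N_0 .
\]
A straightforward induction on $k$ then yields $P^{k}(\ov\alpha)=\ov x_{k\omega}(\cdot,0,\ov\alpha)$ for every $k\in\N$. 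I expect this bookkeeping — making precise that iterating the period map tracks the solution over $[0,k\omega]$ on the infinite-dimensional state space attached to the maximal delay $\tau$, and using that $\tau$ itself is $\omega$-periodic so the shifted sequence really solves the same equation — to be the only genuinely delicate point; everything afterwards is routine.

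\emph{Step 2 (a contracting iterate).} By Corollary~\ref{thm:aplicacao-cont-Xu-Wu-2} there are $\mu>0$ and $C>1$ with $\|\ov x_m(\cdot,n,\ov\alpha)-\ov x_m(\cdot,n,\ov\alpha^*)\|\le C\e^{-\mu(m-n)}\|\ov\alpha-\ov\alpha^*\|$ for all $(m,n)\in\Delta$ and all $\ov\alpha,\ov\alpha^*\in X^N$. Taking $m=k\omega$, $n=0$ and using Step 1,
\[
   \|P^{k}\ov\alpha-P^{k}\ov\alpha^*\|\le C\e^{-\mu\omega k}\,\|\ov\alpha-\ov\alpha^*\|,\qquad k\in\N .
\]
Choosing $k_0\in\N$ with $C\e^{-\mu\omega k_0}<1$, the map $P^{k_0}$ is a contraction on the complete metric space $X^N$, so by Banach's fixed point theorem applied to iterates $P$ has a unique fixed point $\ov\alpha^\circ\in X^N$.

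\emph{Step 3 (conclusion).} Put $\ov x^\circ=\ov x(\cdot,0,\ov\alpha^\circ)$. From $P(\ov\alpha^\circ)=\ov\alpha^\circ$ we get $\ov x^\circ_\omega=\ov x^\circ_0$, and the translation identity of Step 1 with $\ell=1$ forces $\ov x^\circ(m+\omega)=\ov x^\circ(m)$ for all $m\ge-\tau$; thus $\ov x^\circ$ is an $\omega$-periodic solution of~\eqref{HOPFIELD-Xu-Wu}. Conversely, any $\omega$-periodic solution has its segment at $0$ fixed by $P$, so uniqueness of the fixed point gives uniqueness of the $\omega$-periodic solution. Finally, Corollary~\ref{thm:aplicacao-cont-Xu-Wu-2} applied with $\ov\alpha^*=\ov\alpha^\circ$ shows that $\ov x^\circ$ attracts every solution at the exponential rate $\mu$, which is exactly global exponential stability of~\eqref{HOPFIELD-Xu-Wu}.
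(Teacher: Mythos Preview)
Your proposal is correct and follows essentially the same route as the paper: both define the Poincar\'e map $P(\ov\alpha)=\ov x_{\omega}(\cdot,0,\ov\alpha)$ on $X^N$, use periodicity to identify $P^{k}(\ov\alpha)$ with the solution segment at time $k\omega$, apply the exponential estimate from Corollary~\ref{thm:aplicacao-cont-Xu-Wu-2} to make some iterate $P^{k_0}$ a contraction, and then recover the unique fixed point of $P$ and the $\omega$-periodic solution from it. The only cosmetic differences are that you take $n=0$ rather than a generic $n$, spell out the translation identity explicitly, and invoke the iterate form of Banach's theorem directly where the paper checks by hand that the fixed point of $P^{k_0}$ is fixed by $P$ via $P^{k_0}(P\ov\varphi)=P(P^{k_0}\ov\varphi)=P\ov\varphi$.
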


\begin{proof}
   Let $n \in \N_0$. From Corollary \ref{thm:aplicacao-cont-Xu-Wu-2}, there are $\mu>0$ and $C>1$ such that
	\begin{equation} \label{eq:estabilidade-periodico}
		\|\ov x_m(\cdot,n,\ov\alpha)-\ov x_m(\cdot,n,\ov\alpha^*)\|
		\le  \,C \e^{-\mu(m-n)} \|\ov\alpha-\ov \alpha^*\|,
	\end{equation}
   for all $m\geq n$ and all $\ov\alpha,\ov\alpha^*\in X^N$. Now, choosing an integer $k \in\N$ such that
	\begin{equation} \label{eq:est-exp-cont}
		C\e^{-\mu k\omega}<1
	\end{equation}
   and defining the map $P:X^N\to X^N$ by $P(\ov\alpha)=\ov x_{n+\omega}(\cdot,n,\ov\alpha)$. For $\ov \alpha,\ov \alpha^*\in X^N$, we have
	\begin{align*}
		& \|P^k(\ov \alpha)-P^k(\ov \alpha^*)\|\\
		& = \|P(P^{k-1}(\ov \alpha))-P(P^{k-1}(\ov \alpha^*))\|\\
		& = \|\ov x_{n+\omega}(\cdot,n,P^{k-1}(\ov \alpha))
		-\ov x_{n+\omega}(\cdot,n,P^{k-1}(\ov \alpha^*))\|\\
		& = \|\ov x_{n+\omega}(\cdot,n,\ov x_{n+\omega}
         (\cdot,n,P^{k-2}(\ov \alpha)))
		-\ov x_{n+\omega}
		(\cdot,n,\ov x_{n+\omega}(\cdot,n,P^{k-2}(\ov\alpha^*)))\|,
	\end{align*}
	and, as the model  \eqref{HOPFIELD-Xu-Wu} is $\omega$-periodic, from \eqref{eq:estabilidade-periodico},
	\begin{align*}
		\|P^k(\ov \alpha)-P^k(\ov \alpha^*)\|
		& = \|\ov x_{n+2\omega}(\cdot,n,P^{k-2}(\ov \alpha))
		-\ov x_{n+2\omega}(\cdot,n,P^{k-2}(\ov \alpha^*))\|\\
		& = \|\ov x_{n+k\omega}(\cdot,n,\ov \alpha)
		-\ov x_{n+k\omega}(\cdot,n,\ov \alpha^*)\|\\
		& \leq C\e^{-\mu k\omega}\|\ov \alpha-\ov \alpha^*\|.
	\end{align*}
	From \eqref{eq:est-exp-cont}, the map $P^k$ is a contraction on $X^N$. As $X^N$ is a Banach space, we conclude that there is a unique point $\ov\varphi\in X^N$ such that $P^k(\ov \varphi)=\ov \varphi$. Noting that
	$$
	P^k(P(\ov \varphi))=P(P^k(\ov \varphi))=P(\ov \varphi),
	$$
	we have $P(\ov \varphi)=\ov \varphi$ which means $\ov x_{n+\omega}(\cdot,n,\ov \varphi)=\ov \varphi$.
	
	Finally, as $\ov x(m,n,\ov \varphi)$ is a solution of \eqref{HOPFIELD-Xu-Wu} with $a_i, b_{ij},I_i,\tau$ $\omega$-periodic functions, we know that $\ov x(m+\omega,n,\ov \varphi)$ is also a solution of  \eqref{HOPFIELD-Xu-Wu} and
	$$
	\ov x(m,n,\ov \varphi)
	=\ov x(m,n,\ov x_{n+\omega}(\cdot,n,\ov \varphi))
	=\ov x(m+\omega,n,\ov \varphi)
	$$
	for every $m \ge n$. Therefore $\ov x(m,n,\ov \varphi)$ is a $\omega$-periodic solution of \eqref{HOPFIELD-Xu-Wu} and, from \eqref{eq:estabilidade-periodico}, all other solutions converge to it with exponential rates.
\end{proof}
%-----------------------------------------------------------------------------%
\section{Stability of nonuniform contractions}\label{section:stability...}
%-----------------------------------------------------------------------------%
Let $Y$ be a Banach space and denote by $X$ the space of functions
   $$ \alpha:[r,0]_\Z \to Y$$
equipped with the norm $\|\alpha\| = \max\limits_{j\in[r,0]_{\Z}}|\alpha(j)|$, where $|\cdot|$ is the norm in $Y$. Given $N \in \N$, we are going to consider the cartesian products $X^N$ and $Y^N$ equipped with the supremum norm, i.e., for $\ov\alpha = \prts{\alpha_1, \ldots, \alpha_N} \in X^N$ and $\ov y = \prts{y_1, \ldots, y_N} \in Y^N$, we have
   $$ \|\ov\alpha\|
      = \max_{i=1, \ldots,N} \|\alpha_i\|
      = \max_{i=1, \ldots,N} \prts{\max_{j=r, \ldots, 0} \abs{\alpha_i(j)}}$$
and
   $$ |\ov y| = \max_{i=1, \ldots, N} |y_i|.$$

Given a function $\ov x \colon [r,+\infty[_\Z \to Y^N$ we define, for each $m \in \N_0$, $\ov x_m \in X^N$ by
   $$ \ov x_m(j) = \ov x(m+j), \ j=r, r+1, \ldots, 0.$$

Let $\ov f_m : X^N \to Y^N$ be Lipschitz perturbations such that
\begin{equation} \label{cond-f-0}
	\ov f_m(0)=0
\end{equation}
for every $m \in \N_0$.

We are going to consider the following nonlinear delay difference equation
\begin{equation} \label{eq:nonlinear}
	\ov x(m+1) = \ov L_m \ov x_m + \ov f_m(\ov x_m), \ m \in \N_0,
\end{equation}
where, for each $m \in \N_0$, $\ov L_m \colon X^N \to Y^N$ is given by
\begin{equation} \label{def:L_m}
	\ov L_m \ov\alpha
	= \prts{L^{(1)}_m \alpha_1, L^{(2)}_m \alpha_2, \ldots, L^{(N)}_m \alpha_N},
\end{equation}
with $L^{(i)}_m \colon X \to Y$ a bounded linear operator for $i=1, \ldots, N$.
For each $n \in \N_0$ and $\ov\alpha \in X^N$, we obtain a unique function
   $$ \ov x \colon [n+r,+\infty[_\Z \to Y^N,$$
denoted by $\ov x (\cdot, n, \ov\alpha)$, such that $\ov x_n = \ov \alpha$ and \eqref{eq:nonlinear} holds. Consequently, for each $(m,n)\in \Delta$, we can define the operator $\ov\cF_{m,n}:X^N \to X^N$ given by
\begin{equation*} %\label{eq:Tmn}
	\ov\cF_{m,n}\prts{\ov\alpha}
	= \ov  x_n (\cdot,n,\ov\alpha), \ \ov\alpha \in X^N.
\end{equation*}

Associated with equation~\eqref{eq:nonlinear}, we will consider the linear difference equation
\begin{equation} \label{eq:lin:dif}
	v_i(m+1)= L^{(i)}_m(v_{i,m})
\end{equation}
$i=1, \ldots, N$, where $v_{i,m} \in X$ is defined, as usual, by $v_{i,m}(j)= v_i(m+j)$, $j=r,r+1,\ldots,0$ and $L^{(i)}_m$ is given by~\eqref{def:L_m}. For each $n \in \N_0$ and $\alpha_i \in X$, we obtain a unique function $v_i:[n+r,+\infty[_\Z \to Y$, denoted by $v_i(\cdot, n, \alpha_i)$, such that $v_{i,n}=\alpha_i$ verifying~\eqref{eq:lin:dif}.

For each $(m,n) \in \Delta$ and $i=1, \ldots, N$, we define the operator $\cA^{(i)}_{m,n}:X\to X$ by
\begin{equation*} %\label{eq:A^i_(m,n)}
	\cA^{\prts{i}}_{m,n}\alpha_i
   = v_{i,m}(\cdot,n,\alpha_i), \ \alpha_i \in X.
\end{equation*}
We can easily verify that
\begin{enumerate}[\lb=$\alph*)$,\lm=6mm]
	\item $\cA^{(i)}_{m,n}$ is linear for each $(m,n) \in \Delta $;
	\item $\cA^{(i)}_{m,m}=\Id$;
	\item $\cA^{(i)}_{l,m} \cA^{(i)}_{m,n}= \cA^{(i)}_{l,n}$ for $(l,m),(m,n) \in \Delta $.
\end{enumerate}

It is easy to prove by induction in $m$ (see \cite{Barreira_Valls-JDE-2007-238-(470-490)}) that
   $$ \ov\cF_{m,n}(\ov\alpha)
      = \prts{\cF^{(1)}_{m,n}(\ov\alpha), \ldots,
         \cF^{(N)}_{m,n}(\ov\alpha)},$$
where, for $i=1, \ldots, N$,
   $$ \cF^{(i)}_{m,n} (\ov\alpha)
      = \cA_{m,n}^{(i)} \alpha_i
         + \dsum_{k=n}^{m-1} \cA^{(i)}_{m,k+1} \Gamma f^{(i)}_k(\ov x_k),$$
$\ov \alpha = \prts{\alpha_1, \ldots, \alpha_N}$, $\ov f_k(\ov x_k) = \prts{f^{(1)}_k(\ov x_k), \ldots, f^{(N)}_k(\ov x_k)}$ and $\Gamma \colon Y  \to X$ is defined by
   $$ \begin{array}{rcl}
         \Gamma \, u \colon [r,0]_\Z & \!\!\!\to & \!\!\! Y\\
         j \ \ \ & \!\!\!\mapsto & \!\!\!\Gamma u(j) =
         \begin{cases}
            u & \text{ if } j=0,\\
            0 & \text{ if } j < 0,
         \end{cases}
      \end{array}$$
for all $u\in Y$.

\begin{theorem}\label{thm:abstract}
   Let $\ov f_m \colon X^N \to Y^N$ be Lipschitz functions such that~\eqref{cond-f-0} is satisfied and consider equation~\eqref{eq:nonlinear}. Let $\prts{a^{(i)}_{m,n}}_{(m,n) \in \Delta}$, $i=1, \ldots, N$, and let $\prts{a'_{m,n}}_{(m,n) \in \Delta}$ be double sequences such that
	  $$ \|\cA^{(i)}_{m,n}\|
	     \le a^{(i)}_{m,n}
	     \le a'_{m,n}$$	
   for all $(m,n) \in \Delta$, where $\cA^{(i)}_{m,n}$ are the evolutions operators of equation~\eqref{eq:lin:dif} derived from equation~\eqref{eq:nonlinear}.
	Assume that
	\begin{equation*} %\label{eq:lbd:=}
		\lbd
		:= \max_{i=1, \ldots, N}\prtsr{
			\sup_{\prts{m,n} \in \Delta}
			\set{\dfrac{1}{a'_{m,n}}
				\sum_{k=n}^{m-1} a^{(i)}_{m,k+1} \Lip(f^{(i)}_k) a'_{k,n}}}
		< 1.
	\end{equation*}
	Then
	\begin{equation*}
		\|\ov\cF_{m,n} (\ov\alpha)\|
		\le \dfrac{1}{1-\lbd} \, a'_{m,n} \|\ov\alpha\|
	\end{equation*}
	for every $(m,n) \in \Delta$.
\end{theorem}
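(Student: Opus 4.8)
The plan is to fix $n \in \N_0$ and $\ov\alpha \in X^N$ and to recover the solution $\ov x(\cdot,n,\ov\alpha)$, hence $\ov\cF_{m,n}(\ov\alpha) = \ov x_m(\cdot,n,\ov\alpha)$, as the unique fixed point of a contraction on a complete metric space whose metric is weighted by the sequence $\prts{a'_{m,n}}$. Let $\cB$ be the set of all functions $\ov x \colon [n+r,+\infty[_\Z \to Y^N$ with $\ov x_n = \ov\alpha$ and
$$\norm{\ov x}_* := \sup_{m \ge n} \dfrac{\norm{\ov x_m}}{a'_{m,n}} < +\infty,$$
equipped with the metric $d(\ov x,\ov y) = \sup_{m \ge n}\norm{\ov x_m - \ov y_m}/a'_{m,n}$. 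Since any two elements of $\cB$ coincide on $[n+r,n]_\Z$, this is a genuine metric, and $(\cB,d)$ is complete because a $d$-Cauchy sequence converges in $Y^N$ at every integer and the pointwise limit again has finite weighted norm. On $\cB$ define $T$ by $(T\ov x)_n = \ov\alpha$ and, for $m \ge n$,
$$(T\ov x)_m = \prts{\cA^{(i)}_{m,n}\alpha_i + \dsum_{k=n}^{m-1}\cA^{(i)}_{m,k+1}\,\Gamma f^{(i)}_k(\ov x_k)}_{i=1,\ldots,N}.$$
The same induction on $m$ that yields the variation-of-constants formula quoted before the theorem (see \cite{Barreira_Valls-JDE-2007-238-(470-490)}) shows that these windows are the windows of a single function $T\ov x \colon [n+r,+\infty[_\Z \to Y^N$, and that $\ov x \in \cB$ is a fixed point of $T$ if and only if $\ov x$ solves~\eqref{eq:nonlinear} with $\ov x_n = \ov\alpha$; since such a solution is unique, Banach's theorem will identify the fixed point as $\ov x(\cdot,n,\ov\alpha)$.

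First I would verify that $T$ maps $\cB$ into itself, together with the quantitative bound $\norm{T\ov x}_* \le \norm{\ov\alpha} + \lbd\norm{\ov x}_*$; this is the step that uses the hypotheses. From the definition of $\Gamma$ one has $\norm{\Gamma u} = \abs u$, from $\ov f_k(0)=0$ and the Lipschitz property one has $\abs{f^{(i)}_k(\ov x_k)} \le \Lip(f^{(i)}_k)\norm{\ov x_k}$, and by assumption $\norm{\cA^{(i)}_{m,n}} \le a^{(i)}_{m,n} \le a'_{m,n}$. Feeding these, together with $\norm{\alpha_i} \le \norm{\ov\alpha}$ and $\norm{\ov x_k} \le \norm{\ov x}_*\,a'_{k,n}$, into the formula for $(T\ov x)_m$ gives
$$\norm{(T\ov x)_m} \le a'_{m,n}\norm{\ov\alpha} + \norm{\ov x}_*\,\max_{i}\dsum_{k=n}^{m-1} a^{(i)}_{m,k+1}\Lip(f^{(i)}_k)\,a'_{k,n} \le a'_{m,n}\prts{\norm{\ov\alpha} + \lbd\norm{\ov x}_*},$$
where the last inequality is precisely the definition of $\lbd$; dividing by $a'_{m,n}$ and taking the supremum over $m \ge n$ (and using $a'_{n,n} \ge \norm{\cA^{(i)}_{n,n}} = 1$ to handle $m=n$) yields the claim. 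The contraction estimate is the same computation, now with the terms $\cA^{(i)}_{m,n}\alpha_i$ cancelling: for $\ov x,\ov y \in \cB$,
$$\norm{(T\ov x)_m - (T\ov y)_m} \le \max_{i}\dsum_{k=n}^{m-1} a^{(i)}_{m,k+1}\Lip(f^{(i)}_k)\norm{\ov x_k - \ov y_k} \le \lbd\,a'_{m,n}\,d(\ov x,\ov y),$$
so $d(T\ov x,T\ov y) \le \lbd\,d(\ov x,\ov y)$ with $\lbd<1$.

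Banach's fixed point theorem then provides a unique $\ov x^* \in \cB$ with $T\ov x^* = \ov x^*$, and by the remark above $\ov x^* = \ov x(\cdot,n,\ov\alpha)$. Applying the self-map bound to $\ov x^*$ gives $\norm{\ov x^*}_* = \norm{T\ov x^*}_* \le \norm{\ov\alpha} + \lbd\norm{\ov x^*}_*$, and since $\norm{\ov x^*}_* < +\infty$ this rearranges to $\norm{\ov x^*}_* \le \norm{\ov\alpha}/(1-\lbd)$. Reading off the weighted norm at a fixed $m$ then yields
$$\norm{\ov\cF_{m,n}(\ov\alpha)} = \norm{\ov x_m(\cdot,n,\ov\alpha)} = \norm{\ov x^*_m} \le a'_{m,n}\norm{\ov x^*}_* \le \dfrac{a'_{m,n}}{1-\lbd}\,\norm{\ov\alpha}$$
for every $(m,n)\in\Delta$, as required.

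As for difficulty: there is no deep obstacle, since the real content has been moved into the hypothesis $\lbd<1$ with its weighted definition, but two points deserve care. The first, which I would flag as the crux, is that the weight in the metric must be exactly $a'_{m,n}$: this is what forces the contraction constant to come out equal to $\lbd$, whereas a coarser weight would leave the estimate unclosed. The second is the bookkeeping behind the definition of $T$ — that its fixed points really are the solutions of~\eqref{eq:nonlinear} — which is where the delay structure (the windows $\ov x_m$, the operator $\Gamma$, the cocycle identities for $\cA^{(i)}_{m,n}$) has to be handled explicitly; it is routine but not entirely vacuous. Finally, I would remark that Banach's theorem can be dispensed with: the inequality $\norm{\ov x_m}/a'_{m,n} \le \norm{\ov\alpha} + \lbd\max_{n\le k\le m-1}\norm{\ov x_k}/a'_{k,n}$, read directly off the variation-of-constants formula with the same estimates, gives $\norm{\ov x_m}/a'_{m,n} \le \norm{\ov\alpha}/(1-\lbd)$ by induction on $m$; the fixed-point formulation merely packages that induction into a standard principle, in line with the approach announced in the introduction.
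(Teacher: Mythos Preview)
Your proof is correct and follows essentially the same route as the paper's: a weighted fixed-point argument on the space of sequences with $\ov x_n = \ov\alpha$, using the variation-of-constants formula to define the operator, verifying the self-map bound and the contraction estimate with constant $\lbd$, and reading off the conclusion from the fixed point. The only cosmetic difference is that the paper normalises the weight by $\|\ov\alpha\|$ (so that the self-map bound reads $1 + \lbd\abs{\ov x}$ rather than $\|\ov\alpha\| + \lbd\|\ov x\|_*$) and correspondingly excludes $\ov\alpha = 0$ at the outset; your formulation handles $\ov\alpha = 0$ without a separate case, and your explicit remark that $a'_{n,n} \ge \|\cA^{(i)}_{n,n}\| = 1$ covers the boundary $m=n$ a bit more carefully than the paper does.
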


\begin{proof}
   Given $n \in \N_0$ and $\ov\alpha \in X^N \setm{0}$, let $\cC_{n,\ov\alpha}$ be the space of functions
      $$ \ov x \colon [n+r,+\infty[_\Z \to Y^N$$
   such that
	\begin{align*}
		& \ov x_n = \ov\alpha\\
		& \abs{\ov x}_{\cC_{n,\ov\alpha}}
		= \sups{\dfrac{\|\ov x_m\|}{a'_{m,n} \|\ov\alpha\|}
         \colon m \ge n} <+\infty.
	\end{align*}
   It is clear that $\cC_{n,\ov\alpha}$ is a complete metric space with the metric defined by
	  $$ d(\ov x,\ov y)
	     = \abs{\ov x-\ov y}_{\cC_{n,\ov\alpha}}
	     = \sups{\dfrac{\|\ov x_m- \ov y_m\|}{a'_{m,n} \, \|\ov\alpha\|}
		    \colon m \ge n}.$$
	
	For every $\ov x \in \cC_{n,\ov\alpha}$ we define
	  $$ (J\ov x)_m =
	     \begin{cases}
	        \ov\alpha & \text{ if } m=n\\
	        (\xi^{(1)}_m,\ldots,\xi^{(N)}_m)
         & \text{ if } m > n.
	\end{cases}$$
	where, for each $i=1,\ldots,N$,
	  $$ \xi^{(i)}_m
         = \cA^{(i)}_{m,n} \alpha_i
            + \dsum_{k=n}^{m-1} \cA_{m,k+1}^{(i)} \Gamma f_k^{(i)}(\ov x_k).$$
	Since for $m > n$ we have
	\begin{align*}
		\|\prts{J\ov x}_m\|
		& \le \max_{i=1,\ldots,N} \set{\|\cA^{(i)}_{m,n} \alpha_i\|
         + \dsum_{k=n}^{m-1} \|\cA^{(i)}_{m,k+1} \|
            \|\Gamma f^{(i)}_k(\ov x_k)\|}\\
		& \le \max_{i=1,\ldots,N} \set{ a^{(i)}_{m,n} \|\alpha_i\|
         + \dsum_{k=n}^{m-1} a^{(i)}_{m,k+1} \Lip(f^{(i)}_k) \|\ov x_k\|}\\
		& \le \max_{i=1,\ldots,N} \set{ a'_{m,n} \|\alpha_i\|
			+ \dsum_{k=n}^{m-1} a^{(i)}_{m,k+1} \Lip(f^{(i)}_k) a'_{k,n}
         \|\ov\alpha\| \abs{\ov x}_{\cC_{n,\ov\alpha}}}\\
		& \le \prts{1 + \lbd  \abs{\ov x}_{\cC_{n,\ov\alpha}}}
         a'_{m,n} \|\ov\alpha\|
	\end{align*}
	and this implies that $J\ov x$ belongs to $\cC_{n,\ov\alpha}$ and
	\begin{equation} \label{eq:|Jx|<=1+lbd.|x|}
		\abs{J \ov x}_{\cC_{n,\ov\alpha}}
		\le 1 + \lbd  \abs{\ov x}_{\cC_{n,\ov\alpha}}.
	\end{equation}
	Hence $J \colon \cC_{n,\ov\alpha} \to \cC_{n,\ov\alpha}$.
	
   Now we prove that $J$ is a contraction. For every $\ov x, \ov y \in \cC_{n,\ov\alpha}$  and every $m > n$ we have
	\begin{align*}
		\|\prts{J\ov x-J\ov y}_m\|
		& = \|\prts{J\ov x}_m-\prts{J\ov y}_m\|\\
		& \le \max_{i=1,\ldots,N} \set{\dsum_{k=n}^{m-1} \|\cA^{(i)}_{m,k+1} \|
         \|\Gamma f^{(i)}_k(\ov x_k) - \Gamma f^{(i)}_k(\ov y_k)\|}\\
		& \le \max_{i=1,\ldots,N}  \set{\dsum_{k=n}^{m-1} a^{(i)}_{m,k+1}
         \Lip(f^{(i)}_k) \|\ov x_k - \ov y_k\|}\\
		& \le \max_{i=1,\ldots,N} \set{\dsum_{k=n}^{m-1} a^{(i)}_{m,k+1}
         \Lip(f^{(i)}_k) a'_{k,n} \|\ov\alpha\|
         \abs{\ov x- \ov y}_{\cC_{n,\ov\alpha}}}\\
		& \le a'_{m,n} \|\ov\alpha\| \lbd \abs{\ov x- \ov y}_{\cC_{n,\ov\alpha}}
	\end{align*}
	and thus
	  $$ \abs{J\ov x - J\ov y}_{\cC_{n,\ov\alpha}}
	     \le \lbd \abs{\ov x-\ov y}_{\cC_{n,\ov\alpha}}.$$
   Since $\lbd < 1$, $J$ is a contraction and by the Banach fixed point theorem $J$ has a unique fixed point $\ov x^*$. By~\eqref{eq:|Jx|<=1+lbd.|x|} it follows that the fixed point $\ov x^*$ verifies
	  $$ \abs{\ov x^*}_{\cC_{n,\ov\alpha}}
	     \le \dfrac{1}{1-\lbd}$$
	and this proves the theorem.
\end{proof}
%-----------------------------------------------------------------------------%
\section*{Acknowledgments}
Ant\'onio J. G. Bento and C\'esar M. Silva were partially supported by FCT through CMA-UBI (project PEst-OE/MAT/UI0212/2014).

Jos\'e J. Oliveira was supported by the Research Centre of Mathematics of the University of Minho with the Portuguese Funds from the “Funda\c c\~ao para a Ci\^encia e a Tecnologia”, through the Project PEstOE/MAT/UI0013/2014.
%-----------------------------------------------------------------------------%

%-----------------------------------------------------------------------------%
\end{document}